%
\documentclass[11pt]{article}
\usepackage{amsmath,latexsym,amsthm,amsfonts}
\usepackage{amssymb}

\newtheorem{theorem}{Theorem}[section]
\newtheorem{lemma}[theorem]{Lemma}

\oddsidemargin=0pt
\evensidemargin=0pt
\textwidth=160mm

\begin{document}

\newcommand{\GF}{\mathrm{GF}}
\newcommand{\F}{\mathbf{F}}
\newcommand{\bO}{\mathrm{O}}

\newcommand{\ex}{\mathrm{ex}}
\newcommand{\la}{\langle}
\newcommand{\ra}{\rangle}
\newcommand{\pr}{\prime}

\title{A construction for bipartite Tur\'an numbers}
\author{Ivan Livinsky}
\date{}
\maketitle

\begin{abstract}
We consider in detail the well-known family of graphs $G(q,t)$ that establish an asymptotic lower bound for Tur\'an numbers $\ex(n,K_{2,t+1})$. We prove that $G(q,t)$ for some specific $q$ and $t$ also gives an asymptotic bound for $K_{3,3}$ and for some higher complete bipartite graphs as well. The asymptotic bounds we prove are the same as provided by the well-known Norm-graphs.
\end{abstract}

\section{Introduction}
In his 1996 paper \cite{F1} for a prime power $q$ and $t\mid q-1$ F\"uredi introduced graphs $G(q,t)$ 
defined as follows. Let
$\F=\GF(q)$ be a finite field having $q$ elements. Let $H\subseteq \F^\ast$ be a subgroup of the multiplicative group
containing $t$ elements. Define 
$$
V(G(q,t))=(\F\times \F \setminus\{(0, 0)\}) / \sim
$$
 where two pairs $(a_1, b_1)$, $(a_2, b_2)$ are equivalent iff there exists $h\in H$ such that $a_1=ha_2$ and $b_1=hb_2$. Hence, $G(q,t)$ has $(q^2-1)/t$ vertices.
We write $\la a,b\ra$ for the equivalence class containing pair $(a,b)$. Two vertices $\la a,b\ra$ and $\la x,y\ra$ are joined by an edge iff
$$
ax+by\in H.
$$
We can easily see that in this way a simple graph is correctly defined. For a pair $(a,b)$ and $h\in H$ fixed the equation
$$
ax+by=h
$$
defines a line containing $q$ points in $\F^2$ and any two points on this line are pairwise non-equivalent. Therefore, degree of each vertex is either $q$ or $q-1$. Thus, $G(q,t)$ contains at least $\frac{1}{2t}(q^2-1)(q-1)$ edges.

\pagebreak
\begin{theorem}[\cite{F1}]\label{K2_t+1}
For arbitrary prime power $q$  and $t\mid q-1$ the graph $G(q,t)$ is $K_{2,t+1}$-free.
\end{theorem}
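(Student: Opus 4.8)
The plan is to prove the equivalent statement that any two distinct vertices of $G(q,t)$ have at most $t$ common neighbours; since a $K_{2,t+1}$ would require two vertices sharing $t+1$ common neighbours, this is exactly what must be ruled out. So I would fix two distinct vertices $\la a_1,b_1\ra$ and $\la a_2,b_2\ra$ and observe that $\la x,y\ra$ is a common neighbour precisely when both $a_1x+b_1y$ and $a_2x+b_2y$ lie in $H$, i.e. when $(x,y)$ solves the linear system $a_1x+b_1y=h_1$, $a_2x+b_2y=h_2$ for some $(h_1,h_2)\in H\times H$. The problem thus reduces to counting, up to the equivalence $\sim$, the solutions of this system, and I would split into two cases according to the determinant $D=a_1b_2-a_2b_1$. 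Note that $D$ depends on the chosen representatives only up to a nonzero scalar from $H$, so its vanishing is a genuine property of the two vertices.

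In the nondegenerate case $D\neq 0$, each $(h_1,h_2)\in H\times H$ gives a unique solution $(x,y)$, which is nonzero since $h_1,h_2\neq 0$, and conversely every common neighbour arises from exactly one such pair; hence the solution set $S$ has exactly $t^2$ elements. The key point is that replacing $(x,y)$ by $(hx,hy)$ for $h\in H$ sends $(h_1,h_2)$ to $(hh_1,hh_2)$, so $H$ acts on $S$ by scaling and the orbits are precisely the equivalence classes $\la x,y\ra$. This diagonal action of $H$ on $H\times H$ is free (if $(hh_1,hh_2)=(h_1,h_2)$ then $h=1$), so $S$ splits into $t^2/t=t$ orbits, giving exactly $t$ common neighbours.

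In the degenerate case $D=0$, the rows $(a_1,b_1)$ and $(a_2,b_2)$ are linearly dependent, say $(a_2,b_2)=\lambda(a_1,b_1)$ with $\lambda\in\F^\ast$, and distinctness of the two vertices means exactly that $\lambda\notin H$. A common neighbour would require $a_1x+b_1y=h_1\in H$ together with $a_2x+b_2y=\lambda h_1\in H$; since $H$ is a group and $h_1\in H$, this forces $\lambda\in H$, a contradiction. Hence there are no common neighbours at all in this case, and in particular at most $t$.

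Combining the two cases shows that any two distinct vertices share at most $t$ common neighbours, so $G(q,t)$ is $K_{2,t+1}$-free. I expect the step needing the most care to be the count in the nondegenerate case: specifically, verifying that the equivalence classes of solutions correspond bijectively to the orbits of the free diagonal $H$-action, which is what collapses the raw count of $t^2$ solutions down to exactly $t$ vertices.
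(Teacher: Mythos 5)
Your proposal is correct and follows essentially the same route as the paper: reduce common neighbours of two fixed vertices to the $2\times 2$ linear system with right-hand sides in $H\times H$, observe that distinctness of the vertices rules out the degenerate (rank-one) case, and then count the $t^2$ solutions as falling into $t$ equivalence classes. Your only refinements are organizational — casing on the determinant up front rather than deriving full rank by contradiction, and making explicit the free diagonal $H$-action that justifies the paper's terse ``divided into $t$ classes'' step.
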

We reproduce F\"uredi's proof here since we will use some of its steps later.
\begin{proof}
If a vertex $\la x,y\ra$ is attached to two distinct vertices $\la a_1,b_1\ra$ and $\la a_2,b_2 \ra $ then there exist $h_1,h_2\in H$ such that the following system of linear equations
\begin{align*}
a_1x+b_1y &= h_1,\\
a_2x+b_2y &= h_2
\end{align*}
holds.

First, we show that the matrix of the system
$$
\left(
\begin{array}{cc}
a_1 & b_1\\
a_2 & b_2\\
\end{array}
\right)
$$
has full rank. Indeed, both lines are nonzero and if $a_1=ca_2$ and $b_1=cb_2$ for some $c\in\F^\ast$ then we have also $h_1=ch_2$ implying that $c=h_1h_2^{-1}\in H$. Thus, we have that $\la a_1,b_1\ra = \la a_2,b_2\ra$ and we arrive to a contradiction.

Therefore, for arbitrary $h_1$ and $h_2$ there exists a unique solution $(x,y)$. We have $t^2$ choices for $h_1$ and $h_2$ totally, and all the solutions are divided into $t$ classes. Therefore, there are at most $t$ vertices attached to both $\la a_1,b_1\ra$ and $\la a_2,b_2\ra$.
\end{proof}

We show that for some specific choice of $q$ and $t$ the graphs $G(q, t)$ in a similar way give lower bounds for larger bipartite graphs as well. In fact, using them we can obtain the same asymptotic bounds as given by the well-known Norm-graphs \cite{N2}.

\section{Construction for $K_{3,3}$}
Let $q$ be a prime power. Consider the graph $G=G(q^2,q+1)$. Let $\F=\GF(q^2)$.

\begin{theorem}\label{K33}
For arbitrary prime power $q$ the graph $G(q^2,q+1)$ is $K_{3,3}$-free.
\end{theorem}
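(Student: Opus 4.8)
The plan is to prove something slightly stronger than $K_{3,3}$-freeness, namely that any three pairwise distinct vertices $\langle a_1,b_1\rangle$, $\langle a_2,b_2\rangle$, $\langle a_3,b_3\rangle$ of $G(q^2,q+1)$ have at most two common neighbors. Since a copy of $K_{3,3}$ would require three vertices on one side sharing three common neighbors on the other, this bound rules out $K_{3,3}$ at once.

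First I would record the arithmetic feature of the special choice $t=q+1$. Because $q^2-1=(q-1)(q+1)$, the order-$(q+1)$ subgroup $H$ of the cyclic group $\F^\ast$ is exactly the norm-one subgroup $\{h\in\F^\ast: h^{q+1}=1\}$, so every $h\in H$ satisfies $h^q=h^{-1}$. This identity, that Frobenius acts as inversion on $H$, is what will turn the membership conditions into a polynomial equation of controlled degree. Next I would reuse the computation from the proof of Theorem~\ref{K2_t+1}: for a non-proportional pair $\langle a_1,b_1\rangle$, $\langle a_2,b_2\rangle$, solving the $2\times 2$ linear system and quotienting by the diagonal $H$-action exhibits the common neighbors as the classes with representative $(b_2-cb_1,\ a_1c-a_2)$ obtained as $c$ ranges over $H$.

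Imposing adjacency to the third vertex $\langle a_3,b_3\rangle$ on this representative then collapses, after using the bilinear form, to a single condition of the shape $c\alpha-\beta\in H$, where $\alpha=D_{13}/D_{12}$ and $\beta=D_{23}/D_{12}$ with $D_{ij}=a_ib_j-a_jb_i$. The heart of the argument, and the step I expect to be the main obstacle, is bounding the number of $c\in H$ with $c\alpha-\beta\in H$. Here I would write the condition as $(c\alpha-\beta)^{q+1}=1$, expand $(c\alpha-\beta)^q$ using $c^q=c^{-1}$ and $h^q=h^{-1}$, and clear denominators to obtain the quadratic $-\alpha\beta^q\,c^2+(\alpha^{q+1}+\beta^{q+1}-1)c-\alpha^q\beta=0$ in $c$. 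As long as $\alpha$ and $\beta$ are both nonzero the leading coefficient $-\alpha\beta^q$ does not vanish, so there are at most two roots, hence at most two common neighbors.

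What remains is to justify the non-degeneracy $\alpha,\beta\neq 0$, i.e.\ that no two of the three left vertices are proportional; this is exactly where distinctness enters. If $\langle a,b\rangle$ and $\langle\lambda a,\lambda b\rangle$ are distinct but proportional, then $\lambda\notin H$, and any purported common neighbor $\langle x,y\rangle$ would force $ax+by\in H$ and $\lambda(ax+by)\in H$ simultaneously, giving $\lambda\in H$, a contradiction; so proportional distinct classes share no neighbor at all. Consequently, in any $K_{3,3}$ the three left vertices must be pairwise non-proportional, every $D_{ij}$ is nonzero, and the quadratic above is genuinely of degree two, completing the contradiction.
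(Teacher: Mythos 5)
Your proposal is correct and is essentially the paper's own proof in different clothing: your condition $c\alpha-\beta\in H$ with $c=h_2/h_1\in H$ is exactly the paper's reduction $\alpha r+\beta s=1$ with $r,s\in H$ (Lemma~\ref{L}), and your quadratic $-\alpha\beta^q c^2+(\alpha^{q+1}+\beta^{q+1}-1)c-\alpha^q\beta=0$ is the same Frobenius-as-inversion computation that proves that lemma, while your observation that distinct proportional classes share no neighbor is the full-rank argument borrowed from Theorem~\ref{K2_t+1}. The only (harmless) slip is calling $(b_2-cb_1,\,a_1c-a_2)$ the representative of the common neighbor rather than its rescaling by $1/D_{12}$, which your subsequent formulas for $\alpha$ and $\beta$ already account for.
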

We will need an auxiliary lemma.

\begin{lemma}\label{L}
Let $H\subset\F^\ast$, $|H|=q+1$, be a subgroup. Let $a,b\in\F$, $a\neq 0$, $b\neq 0$. Then the equation
$$
ax+by=1
$$
has at most two solutions $(x,y)$ for $x,y\in H$.
\end{lemma}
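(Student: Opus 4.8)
The plan is to exploit the special arithmetic of the order-$(q+1)$ subgroup $H$ inside $\F^\ast=\GF(q^2)^\ast$. Since $\F^\ast$ is cyclic of order $q^2-1=(q-1)(q+1)$, there is a unique subgroup of order $q+1$, namely the norm-one subgroup $H=\{z\in\F^\ast : z^{q+1}=1\}$. The crucial consequence is that every $z\in H$ satisfies $z^q=z^{-1}$; equivalently, the Frobenius automorphism $z\mapsto z^q$ of $\GF(q^2)$ over $\GF(q)$ acts on $H$ by inversion. This single observation is what drives the whole argument.

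First I would take a putative solution $(x,y)\in H^2$ of $ax+by=1$ and apply the Frobenius $z\mapsto z^q$ to both sides. Because $x,y\in H$, this produces the companion equation $a^q x^{-1}+b^q y^{-1}=1$. Clearing denominators by multiplying through by $xy$ rewrites it as $a^q y+b^q x=xy$. Thus every solution in $H^2$ satisfies \emph{both} the original linear relation and this new bilinear one, and the problem is reduced to bounding the number of simultaneous solutions of the two equations in $\F^2$.

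Next I would eliminate one variable. Since $b\neq 0$, the linear equation gives $y=(1-ax)/b$; substituting into $a^q y+b^q x=xy$ and clearing the denominator $b$ yields, after collecting terms, a polynomial in $x$ alone. Writing $a^{q+1},b^{q+1}\in\GF(q)$ for the norms, it takes the form $ax^2+(b^{q+1}-a^{q+1}-1)x+a^q=0$. Because $a\neq 0$, the leading coefficient is nonzero, so this is a genuine quadratic with at most two roots $x\in\F$; each admissible $x$ then determines $y$ uniquely through the linear equation. Hence there are at most two solutions $(x,y)$, which is exactly the claim.

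The only real content is the first step: recognizing $H$ as the norm-one subgroup, so that Frobenius inverts its elements and supplies the second equation. After that the argument is elementary elimination, and the main point to verify is merely that the resulting polynomial in $x$ does not degenerate to lower degree, which is guaranteed by the hypothesis $a\neq 0$; the hypothesis $b\neq 0$ is what lets me solve cleanly for $y$, and by symmetry one could eliminate $x$ instead.
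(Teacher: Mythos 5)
Your proof is correct and is essentially the paper's own argument: both exploit that $H$ is the norm-one subgroup of $\GF(q^2)^\ast$, so the Frobenius $z\mapsto z^q$ inverts its elements, and both arrive at the identical quadratic $ax^2-(a^{q+1}-b^{q+1}+1)x+a^q=0$ with $y$ then determined linearly. The paper just packages the Frobenius step as the computation $b^{q+1}=(by)^{q+1}=(1-ax)^q(1-ax)$ rather than as a companion equation followed by elimination, which is the same calculation rearranged.
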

\begin{proof}
Assume that $(x,y)$ is a solution. Then $by = 1-ax$, $x^{q+1}=y^{q+1}=1$ and
\begin{align*}
b^{q+1}&=(by)^{q+1} = (1-ax)^{q+1} = (1-ax)^q(1-ax)\\
 &= (1-a^qx^q)(1-ax) = (1-\tfrac{a^q}{x})(1-ax) 
= 1 - ax -\frac{a^q}{x} +a^{q+1}.
\end{align*}
Therefore, $x$ is a solution to a proper quadratic equation
$$
ax^2 - (a^{q+1}- b^{q+1} + 1) x +  a^q =0,
$$
and $y= \frac{1-ax}{b}$.

Therefore, there are at most two possible pairs $(x,y)$.
\end{proof}

\begin{proof}[Proof of Theorem~\ref{K33}]
Consider three distinct vertices $\la a_1,b_1\ra$, $\la a_2,b_2\ra$, $\la a_3,b_3\ra$ and assume that another vertex $\la x,y\ra$ is attached to all of them. We have a system of equations
\begin{align*}
a_1x+b_1y = h_1,\\
a_2x+b_2y = h_2,\\
a_3x+b_3y = h_3.
\end{align*}
We know from the proof of Theorem~\ref{K2_t+1} that the matrix 
$$
\left(
\begin{array}{cc}
a_1 & b_1\\
a_2 & b_2\\
\end{array}
\right)
$$
has rank two. Therefore, the third equation must be a linear combination of the first two. That is, there must exist uniquely defined coefficients $\alpha,\beta\in\F$ such that
\begin{align*}
\alpha a_1+\beta a_2 &= a_3,\\
\alpha b_1+\beta b_2 &= b_3,\\
\alpha h_1+\beta h_2 &= h_3.
\end{align*}
Moreover, $\alpha\neq 0$ and $\beta\neq 0$ since otherwise we will have equality between the initial vertices. Consider the last equation. Let $r=h_1h_3^{-1}$ and $s=h_2h_3^{-1}$. Then
$$
\alpha r+\beta s =1.
$$
However, according to Lemma~\ref{L} there are at most two solutions $(r,s)$ to this equation. Therefore, there are at most $2(q+1)$ triples $(h_1,h_2,h_3)$ such that the original system has a solution. These triples define at most two vertices $\la x,y\ra$. Therefore, $G$ is $K_{3,3}$-free.
\end{proof}
We have that $|V(G)| = \frac{q^4-1}{q+1}=(q^2+1)(q-1)=q^3-q^2+q-1$. We can also give an exact formula for the number of edges.
\begin{theorem}
Let $G=G(q^2,q+1)$. If $q$ is odd then $|E(G)|=\frac{1}{2}(q^5 - q^4 + q^3 - 2q^2 + 1)$. 
If $q=2^k$ then 
$|E(G)|=\frac{1}{2}(q^5 - q^4 + q^3 - 2q^2)$.
\end{theorem}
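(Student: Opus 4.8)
The plan is to apply the handshaking lemma, so the entire problem reduces to pinning down the degree sequence of $G$. First I would establish that every vertex has degree either $q^2$ or $q^2-1$, exactly as in the introduction but with the field $\F=\GF(q^2)$ of size $q^2$ in place of $\GF(q)$. Fixing a vertex $\la a,b\ra$, for each $h\in H$ the line $ax+by=h$ contains $q^2$ points of $\F^2$, none of them the origin (as $h\neq 0$), and no two points on it are equivalent (if $(x_1,y_1)=\lambda(x_2,y_2)$ with $\lambda\in H$ then $h=\lambda h$ forces $\lambda=1$). The $q+1$ lines obtained as $h$ ranges over $H$ are parallel and disjoint, giving $(q+1)q^2$ admissible pairs $(x,y)$; since $H$ acts freely these fall into $q^2$ equivalence classes, which are exactly the neighbours of $\la a,b\ra$, except that $\la a,b\ra$ itself occurs among them precisely when $a\cdot a+b\cdot b=a^2+b^2\in H$. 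Hence $\deg\la a,b\ra=q^2-1$ if $a^2+b^2\in H$ and $\deg\la a,b\ra=q^2$ otherwise.

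Writing $N$ for the number of vertices with $a^2+b^2\in H$ and recalling $|V(G)|=q^3-q^2+q-1$, the handshaking lemma gives
$$
|E(G)|=\tfrac12\bigl(q^2(|V(G)|-N)+(q^2-1)N\bigr)=\tfrac12\bigl(q^2|V(G)|-N\bigr)=\tfrac12\bigl(q^5-q^4+q^3-q^2-N\bigr).
$$
Comparing with the two claimed formulas, it remains only to show that $N=q^2-1$ when $q$ is odd and $N=q^2$ when $q$ is even.

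To compute $N$ I would first note that the condition $a^2+b^2\in H$ depends only on the class $\la a,b\ra$: replacing $(a,b)$ by $h(a,b)$ multiplies $a^2+b^2$ by $h^2\in H$, which preserves membership in $H$. Since the action of $H$ is free, $N=M/(q+1)$ where $M=\#\{(a,b)\in\F^2\setminus\{(0,0)\}:a^2+b^2\in H\}=\sum_{h\in H}\#\{(a,b):a^2+b^2=h\}$, and here I would use that $H$ is the unique subgroup of order $q+1$ of the cyclic group $\F^\ast$, namely $\{z:z^{q+1}=1\}$. The count of $M$ then splits by characteristic. For $q$ odd, $-1$ is a square in $\GF(q^2)$ (as $q^2\equiv 1\pmod 4$), so fixing $i$ with $i^2=-1$ the substitution $u=a+ib$, $v=a-ib$ is an invertible linear change of coordinates with $a^2+b^2=uv$; each equation $uv=h$ with $h\neq 0$ has $q^2-1$ solutions, so $M=(q+1)(q^2-1)$ and $N=q^2-1$. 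For $q=2^k$ one has $a^2+b^2=(a+b)^2$, and since squaring is an automorphism of $\F^\ast$ (as $q^2-1$ is odd) fixing $H$ setwise, the condition becomes $a+b\in H$; for each of the $q+1$ values of $a+b\in H$ there are $q^2$ pairs $(a,b)$, whence $M=(q+1)q^2$ and $N=q^2$. Substituting these values of $N$ into the displayed formula yields the two stated expressions.

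The main obstacle I anticipate is the determination of $N$, and specifically the clean handling of the two characteristics. The odd case hinges on $-1$ being a square in $\GF(q^2)$, so that the form $a^2+b^2$ splits into linear factors and becomes the hyperbola $uv=h$; the even case hinges on the collapse $a^2+b^2=(a+b)^2$ together with the fact that squaring permutes $H$. Getting the free-action normalization $N=M/(q+1)$ and the invariance of the defining condition right is routine but needs care, as does the bookkeeping that excludes the origin and correctly identifies the self-adjacency condition $a^2+b^2\in H$ distinguishing degree $q^2$ from $q^2-1$.
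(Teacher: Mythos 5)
Your proof is correct and follows essentially the same route as the paper: both identify the degree dichotomy (degree $q^2-1$ exactly when $x^2+y^2\in H$), count solutions of $x^2+y^2=c$ separately for odd $q$ (where $-1$ is a square) and $q=2^k$ (where the equation linearizes), and finish with the handshaking lemma. Your write-up merely makes explicit the orbit-counting normalization $N=M/(q+1)$ and the self-adjacency justification that the paper leaves implicit.
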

\begin{proof}
If $q$ is odd then $q^2\equiv 1\ (\mathrm{mod}\ 4)$ and $-1$ is a square in $\F$. Therefore, the equation
$$
x^2+y^2=c
$$
has exactly $|\F|-1=q^2-1$ solutions for all $c\neq 0$. If $q=2^k$ then this equation defines a line in $\F^2$ and has $|\F|=q^2$ solutions.

A vertex $\la x,y\ra$ has degree $q^2-1$ in $G$ iff $x^2+y^2\in H$ according to the construction.
 Therefore, the number of vertices of degree $q^2-1$ is equal to $q^2-1$ for $q$ odd, and to $q^2$ for $q$ even. Thus, for $q$ odd
\begin{align*}
|E(G)|&=\frac{1}{2}\left( (q^3-2q^2+q) q^2 + (q^2-1)^2\right) \\
&= \frac{1}{2}(q^5 - q^4 + q^3 - 2q^2 + 1).
\end{align*}
And for $q$ even
\begin{align*}
|E(G)|&=\frac{1}{2}\left( (q^3-2q^2+q - 1) q^2 + q^2(q^2-1)\right) \\
&= \frac{1}{2}(q^5 - q^4 + q^3 - 2q^2).
\end{align*}
\end{proof}

Therefore, for $n=q^3-q^2+q-1$ we have a $K_{3,3}$-free graph with $\frac{1}{2}n^{\frac{5}{3}} + \frac{1}{3}n^{\frac{4}{3}}+\bO(n)$ edges. Together with the upper bound \cite{F2} for $K_{3,3}$ this gives the asymptotic formula
$$
\ex(n,K_{3,3})=\frac{1}{2}n^{\frac{5}{3}}(1+o(1)).
$$
We can also obtain a lower bound for the graphs $K_{3,2t^2+1}$.
\begin{theorem}\label{K3,2t^2+1}
For arbitrary prime power $q$ and $t\mid q-1$ the graph $G(q^2, t(q+1))$ is $K_{3,2t^2+1}$-free.
\end{theorem}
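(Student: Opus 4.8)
The plan is to mirror the proof of Theorem~\ref{K33}, replacing Lemma~\ref{L} with a version that counts solutions inside the larger subgroup $H$ of order $t(q+1)$. First I would pin down the arithmetic structure of $H$. Since $\F^\ast=\GF(q^2)^\ast$ is cyclic of order $(q-1)(q+1)$ and $t(q+1)$ divides this, $H$ is the unique subgroup of that order, and it contains the norm-one subgroup $\{z:z^{q+1}=1\}$ of order $q+1$. Writing $N(z)=z^{q+1}$ for the norm map onto $\GF(q)^\ast$, this means $H=N^{-1}(S)$, where $S\subseteq\GF(q)^\ast$ is the subgroup of order $t$. The only consequence I need is this: if $x\in H$ then $x^{q+1}\in S$, a set of size $t$.

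Next I would prove the analogue of Lemma~\ref{L}: for $a,b\in\F$ with $a\neq 0$, $b\neq 0$, the equation $ax+by=1$ has at most $2t^2$ solutions $(x,y)\in H\times H$. Given such a solution, set $n=x^{q+1}\in S$ and $m=y^{q+1}\in S$. Applying the Frobenius $z\mapsto z^q$ to $ax+by=1$ and using $x^q=n/x$, $y^q=m/y$ gives a second equation; eliminating $y$ via $y=(1-ax)/b$ yields the quadratic
$$
ax^2 - (1 + a^{q+1}n - b^{q+1}m)\,x + a^q n = 0,
$$
which is proper because its leading coefficient $a$ is nonzero. For each of the $t^2$ choices of the pair $(n,m)\in S\times S$ this quadratic has at most two roots $x$, and then $y=(1-ax)/b$ is determined, so there are at most $2t^2$ solutions in total. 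For $t=1$ one has $S=\{1\}$, $n=m=1$, and this reduces exactly to Lemma~\ref{L}.

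Finally I would run the argument of Theorem~\ref{K33} essentially verbatim. Given three distinct vertices $\la a_1,b_1\ra$, $\la a_2,b_2\ra$, $\la a_3,b_3\ra$ and a common neighbour $\la x,y\ra$, rank-two-ness of the leading $2\times 2$ matrix forces unique nonzero $\alpha,\beta$ with $\alpha(a_1,b_1,h_1)+\beta(a_2,b_2,h_2)=(a_3,b_3,h_3)$, so that $r=h_1h_3^{-1}$ and $s=h_2h_3^{-1}$ lie in $H$ and satisfy $\alpha r+\beta s=1$. By the generalized lemma there are at most $2t^2$ such pairs $(r,s)$, and since a common neighbour is determined up to the $H$-scaling $h_3\mapsto hh_3$ (which only rescales the representative $(x,y)$), each pair $(r,s)$ corresponds to at most one vertex $\la x,y\ra$. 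Thus the three vertices have at most $2t^2$ common neighbours, proving that $G(q^2,t(q+1))$ is $K_{3,2t^2+1}$-free.

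As for difficulty: the structural identification of $H$ and the final counting are routine given the $K_{3,3}$ proof, so the only genuine content is the generalized lemma. The new idea there is observing that $x^{q+1}$ and $y^{q+1}$ are no longer forced to equal $1$ but instead range over the size-$t$ set $S$. The point I would check most carefully is that each of the $t^2$ parameter choices $(n,m)$ really produces a proper quadratic (leading coefficient nonzero) rather than a degenerate equation, since it is precisely this that yields the bound $2t^2$.
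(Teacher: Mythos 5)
Your proof is correct and takes essentially the same approach as the paper: both arguments reduce to bounding by $2t^2$ the number of solutions of $\alpha r+\beta s=1$ with $r,s\in H$, by splitting $H$ into the $t$ cosets of the norm-one subgroup $H^\pr$ of order $q+1$ and getting at most two solutions per pair of cosets. The only (cosmetic) difference is that the paper writes $r=g_ir^\pr$, $s=g_js^\pr$ with coset representatives $g_i,g_j$ of $H/H^\pr$ and invokes Lemma~\ref{L} as a black box for $(g_i\alpha)r^\pr+(g_j\beta)s^\pr=1$, whereas you re-derive the quadratic with the norm parameters $(n,m)=(x^{q+1},y^{q+1})\in S\times S$; these are the same partition of the solution set, since the cosets of $H^\pr$ in $H$ are exactly the fibers of the norm map.
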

\begin{proof}
We follow the same steps as in the proof of Theorem~\ref{K33}. We arrive to the equation
$$
\alpha r+\beta s =1,
$$
where now we assume that $r,s$ belong to the subgroup $H$ of order $t(q+1)$. Let $H^\pr$ be a subgroup of order $q+1$. Then we can choose $t$ coset representatives $g_1,\ldots,g_t$ of $H / H^\pr$. We have that $r=g_i r^\pr$, $s=g_j s^\pr$ for some $r^\pr,s^\pr\in H^\pr$ and
$$
\alpha^\pr r^\pr+\beta^\pr s^\pr =1,
$$
where $\alpha^\pr = g_i\alpha$, $\beta^\pr = g_j\beta$. According to Lemma~\ref{L} this equation has at most two solutions. Since the choice of $g_i, g_j$ can be arbitrary we have that there are at most $2t^2$ pairs $(r,s)$. Therefore, $G$ is $K_{3,2t^2+1}$-free.
\end{proof}

This gives an asymptotic bound of the form
$$
\ex(n, K_{3,2t^2+1})\geq \frac{1}{2}t^{\frac{2}{3}} n^{\frac{5}{3}}(1+o(1)).
$$
This asymptotic bound was also proved by Mont\'agh in his PhD thesis \cite{M} using a factorization of the Brown graph \cite{B}.

\section{General case}
Alon, Koll\'ar, R\'onyai, and Szab\'o introduced Norm-graphs in the papers \cite{N1,N2}. They constructed a family of graphs that were $K_{r,(r-1)!+1}$-free and had $n$ vertices and $\frac{1}{2}n^{2-\frac{1}{r}}(1+o(1))$ edges. Their construction depended heavily on the following algebro-geometric lemma

\begin{lemma}[\cite{N1}]\label{AG}
Let $q$ be a prime power, let $\F=\GF(q^r)$, and let 
$$
N:\F\rightarrow\GF(q),\quad x\mapsto x^{1+q+\cdots + q^{r-1}}
$$
 be the norm map of $\F$ over $\GF(q)$. Let $c_1, \ldots, c_r, d_1,\ldots, d_r\in \F$ be some elements. If $d_i\neq d_j$ for $i\neq j$ then the system of equations
\begin{align*}
N(x+d_1)&=c_1,\\
N(x+d_2)&=c_2,\\
\ldots&\\
N(x+d_r)&=c_r,
\end{align*}
has at most $r!$ solutions for $x\in\F$.
\end{lemma}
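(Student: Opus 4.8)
The plan is to convert the norm system into an honest system of polynomial equations in several independent variables and then bound its solutions geometrically. First I would use that the Frobenius map $t\mapsto t^q$ is a field automorphism, so $(x+d_i)^{q^k}=x^{q^k}+d_i^{q^k}$ and hence $N(x+d_i)=\prod_{k=0}^{r-1}(x^{q^k}+d_i^{q^k})$. Introducing independent variables $z_0,\dots,z_{r-1}$ (with $z_k$ standing for $x^{q^k}$) turns the system into $\prod_{k=0}^{r-1}(z_k+d_i^{q^k})=c_i$ for $i=1,\dots,r$. Each genuine solution $x\in\F$ yields the tuple $(x,x^q,\dots,x^{q^{r-1}})$, and distinct values of $x$ give distinct tuples (they differ in the first coordinate), so it suffices to show this polynomial system has at most $r!$ solutions in $\overline{\F}^{\,r}$; no hypothesis on the $c_i$ is needed.

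Write $b_{i,k}=d_i^{q^k}$ and $g_i=\prod_k(z_k+b_{i,k})$. Two structural features will be crucial: each $g_i$ is multilinear (degree one in every $z_k$) with the same top-degree form $z_0z_1\cdots z_{r-1}$, and, because $d_1,\dots,d_r$ are distinct and $t\mapsto t^{q^k}$ is injective, the entries in each column are distinct, i.e.\ $b_{i,k}\neq b_{i^\pr,k}$ whenever $i\neq i^\pr$. I would homogenize each variable separately, replacing $z_k$ by $[Y_k:Z_k]$ in $(\mathbb{P}^1)^r$; the $i$-th equation becomes the hypersurface $\tilde g_i:\ \prod_k(Y_k+b_{i,k}Z_k)-c_i\prod_k Z_k=0$ of multidegree $(1,\dots,1)$. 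The strategy is then twofold: (i) show the intersection $\bigcap_i\{\tilde g_i=0\}$ is finite, and (ii) invoke the multihomogeneous B\'ezout theorem, which in the Chow ring $\mathbb{Z}[h_1,\dots,h_r]/(h_1^2,\dots,h_r^2)$ of $(\mathbb{P}^1)^r$ assigns each hypersurface the class $h_1+\dots+h_r$, so that $(h_1+\dots+h_r)^r=r!\,h_1\cdots h_r$ and a zero-dimensional intersection carries at most $r!$ points.

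The hard part will be the finiteness in (i), and this is exactly where column-distinctness does the work. Suppose a point of the intersection lay on a coordinate face, say $Z_{i_0}=0$. Then $Y_{i_0}\neq 0$, the term $c_i\prod_k Z_k$ vanishes, and each equation $\tilde g_i=0$ forces $\prod_{k\neq i_0}(Y_k+b_{i,k}Z_k)=0$; hence for every $i$ there is an index $k(i)\neq i_0$ with $Y_{k(i)}+b_{i,k(i)}Z_{k(i)}=0$. Since there are $r$ indices $i$ but only $r-1$ available coordinates, two indices $i\neq i^\pr$ share $k(i)=k(i^\pr)=k$; subtracting the two relations gives $(b_{i,k}-b_{i^\pr,k})Z_k=0$, whence $Z_k=0$ by column-distinctness, and then $Y_k=0$ as well, which is impossible in $\mathbb{P}^1$. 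Thus the intersection meets no coordinate face and lies in the affine chart $\mathbb{A}^r$; being closed in the complete variety $(\mathbb{P}^1)^r$ it is itself complete, and a complete subvariety of affine space is finite. With zero-dimensionality established, step (ii) yields the bound $r!$, and the injection from the first paragraph carries it back to the number of solutions $x\in\F$, completing the proof.
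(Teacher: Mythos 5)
The paper never proves this lemma itself --- it is quoted from \cite{N1} and used as a black box, so there is no in-paper proof to compare against; the relevant benchmark is the original Koll\'ar--R\'onyai--Szab\'o argument. Your proof is correct and is essentially that original argument: the Frobenius expansion $N(x+d_i)=\prod_{k}(x^{q^k}+d_i^{q^k})$ reducing to a multilinear system in independent variables, the pigeonhole argument at infinity (where the hypothesis $d_i\neq d_j$ enters, via column-distinctness) showing the intersection in $(\mathbb{P}^1)^r$ misses every face $Z_k=0$ and is therefore finite, and the multihomogeneous B\'ezout bound $(h_1+\cdots+h_r)^r=r!\,h_1\cdots h_r$.
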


Using Lemma~\ref{AG} we establish the same asymptotic bound using graphs $G(q,t)$.

\begin{theorem}\label{Kr(r-1)!+1}
For arbitrary prime power $q$ the graph $G(q^{r-1},q^{r-2}+\cdots + q + 1)$ is $K_{r,(r-1)!+1}$-free.
\end{theorem}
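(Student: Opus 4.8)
The plan is to recognize the subgroup $H$ as the kernel of the norm map, thereby converting the adjacency condition into a single norm equation, so that counting common neighbors becomes exactly the situation of Lemma~\ref{AG}.

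First I would observe that since $\F^\ast=\GF(q^{r-1})^\ast$ is cyclic of order $q^{r-1}-1$ and $t=q^{r-2}+\cdots+q+1=(q^{r-1}-1)/(q-1)$ divides this order, the subgroup $H$ of order $t$ is unique. Writing $N:\GF(q^{r-1})\to\GF(q)$ for the norm map $x\mapsto x^{1+q+\cdots+q^{r-2}}$, whose exponent is precisely $t$, we have $N(z)=z^t$, so $H=\{z:z^t=1\}=\ker N$. Consequently $z\in H$ iff $N(z)=1$ (and $N(z)=1$ forces $z\neq 0$), so the adjacency rule reads: $\la a,b\ra$ and $\la x,y\ra$ are joined iff $N(ax+by)=1$. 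This is well-defined on equivalence classes because $N$ is multiplicative and $N(h)=1$ for every $h\in H$.

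Next, suppose for contradiction that $r$ distinct vertices $\la a_1,b_1\ra,\ldots,\la a_r,b_r\ra$ admit common neighbors; I want to bound their number by $(r-1)!$. By the rank argument in the proof of Theorem~\ref{K2_t+1}, the coefficient vectors $(a_i,b_i)$ are pairwise linearly independent. Taking the first two as a basis, I would change coordinates by setting $u=a_1x+b_1y$ and $v=a_2x+b_2y$; this is an invertible linear map commuting with the $H$-scaling $(x,y)\mapsto(hx,hy)$. For $i\geq 3$ write $(a_i,b_i)=\alpha_i(a_1,b_1)+\beta_i(a_2,b_2)$, where pairwise independence forces $\alpha_i\neq 0$ and $\beta_i\neq 0$. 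A common neighbor then satisfies $N(u)=1$, $N(v)=1$, and $N(\alpha_i u+\beta_i v)=1$ for $i\geq 3$. Since $N(u)=1$ gives $u\in H$, I normalize the $H$-orbit by scaling to $u=1$, which yields a unique representative $(1,w)$; thus common neighbors correspond bijectively to values $w\in\F$ satisfying $N(w)=1$ and $N(\alpha_i+\beta_i w)=1$ for $i\geq 3$.

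Finally I would rewrite these $r-1$ conditions in the exact shape required by Lemma~\ref{AG}. By multiplicativity, $N(\alpha_i+\beta_i w)=1$ is equivalent to $N(w+d_i)=c_i$ with $d_i=\alpha_i\beta_i^{-1}$ and $c_i=N(\beta_i)^{-1}$, while $N(w)=1$ is $N(w+d_2)=c_2$ with $d_2=0$ and $c_2=1$. The crucial point, and the step I expect to be the main obstacle, is verifying that $d_2,\ldots,d_r$ are pairwise distinct, since this is exactly the hypothesis that activates Lemma~\ref{AG}. Each $d_i\neq 0=d_2$ because $\alpha_i\neq 0$; and $d_i=d_j$ for $i,j\geq 3$ would make $(\alpha_i,\beta_i)$ and $(\alpha_j,\beta_j)$ proportional, hence (as $(a_1,b_1),(a_2,b_2)$ form a basis) $(a_i,b_i)$ and $(a_j,b_j)$ proportional, contradicting pairwise independence. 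With distinctness established, Lemma~\ref{AG} applied to the field $\GF(q^{r-1})$ and the $r-1$ equations $N(w+d_i)=c_i$ bounds the number of admissible $w$, and hence the number of common neighbors, by $(r-1)!$. Therefore no $r$ vertices have $(r-1)!+1$ common neighbors, so $G(q^{r-1},q^{r-2}+\cdots+q+1)$ is $K_{r,(r-1)!+1}$-free.
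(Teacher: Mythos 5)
Your proof is correct and takes essentially the same route as the paper's: after the pairwise-independence (rank) argument, both reduce the common-neighbor count to a single unknown $w=h_2/h_1$ (your $v/u$) satisfying $r-1$ norm equations $N(w+d_i)=c_i$ with pairwise distinct shifts, and then invoke Lemma~\ref{AG} over $\GF(q^{r-1})$. Your explicit identification of $H$ as the kernel of the norm map and the normalization to the representative $(1,w)$ are just cleaner statements of steps the paper performs implicitly (its ``$N(h)=1$ for all $h\in H$'' and ``each solution defines a unique vertex'').
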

\begin{proof}
Let $\F=\GF(q^{r-1})$. Let $H\subset\F^\ast$ be a subgroup of order $q^{r-2} + \cdots +q + 1$.
Consider distinct $r$ vertices $\la a_1,b_1\ra$, $\ldots$, $\la a_r,b_r\ra$ and assume that another vertex $\la x,y\ra$ is attached to all of them. We have another system of linear equations
\begin{align*}
a_1x+b_1y &= h_1,\\
a_2x+b_2y &= h_2,\\
\ldots&\\
a_rx+b_ry &= h_r.
\end{align*}
As before, we have that all equations from third to last are linear combinations of the first two. That is, for every $j=3,\ldots,r$ there exist uniquely defined nonzero elements $\alpha_j, \beta_j\in\F$ such that
\begin{align*}
\alpha_j a_1+\beta_j a_2 &= a_j,\\
\alpha_j b_1+\beta_j b_2 &= b_j,\\
\alpha_j h_1+\beta_j h_2 &= h_j.
\end{align*}
Therefore, we have a system of $r-2$ equations
\begin{align*}
\alpha_3 h_1+\beta_3 h_2 &= h_3,\\
\ldots&\\
\alpha_r h_1+\beta_r h_2 &= h_r.
\end{align*}
Note that $N(h)=1$ for all $h\in H$. Therefore for each $j=3,\ldots,r$ we have that
$$
N\left(\frac{\alpha_j}{\beta_j}+ \frac{h_2}{h_1} \right) =N\left(\frac{h_j}{ h_1\beta_j}\right)
  = N(\beta_j)^{-1}.
$$
Moreover, we also have that
$$
N\left(\frac{h_2}{h_1}\right) = 1.
$$
Note that $\frac{\alpha_j}{\beta_j}\neq 0$, and  $\frac{\alpha_i}{\beta_i}\neq \frac{\alpha_j}{\beta_j}$ when $i\neq j$ since otherwise we would get $\la a_i,b_i\ra=\la a_j,b_j\ra$. 

We have a system of $r-1$ equations that satisfies the conditions of Lemma~\ref{AG}. Thus, it has at most $(r-1)!$ solutions for $h_2/h_1$. Each solution defines a unique vertex $\la x,y\ra$ attached to all $\la a_i,b_i\ra$. Therefore, $G$ is $K_{r, (r-1)!+1}$-free.
\end{proof}
We have that $G=G(q^{r-1},q^{r-2}+\cdots + q + 1)$ has $(q^{r-1}+1)(q-1)$ vertices and at least $\frac{1}{2}(q^{2r-2}-1)(q-1)$ edges. Therefore, it achieves the asymptotic lower bound of the form
$$
\ex(n, K_{r, (r-1)!+1})\geq \frac{1}{2}n^{2-\frac{1}{r}}(1+o(1)).
$$
Finally, we can prove a lower bound for the graphs $K_{r, t^{r-1}(r-1)!+1}$.

\begin{theorem}\label{Krt^(r-1)(r-1)!+1}
For arbitrary prime power $q$ and $t\mid q-1$ the graph $G(q^{r-1},t(q^{r-2}+\cdots + q + 1))$ is $K_{r,t^{r-1}(r-1)!+1}$-free.
\end{theorem}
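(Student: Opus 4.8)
The plan is to fuse the norm-map argument of Theorem~\ref{Kr(r-1)!+1} with the coset-counting device of Theorem~\ref{K3,2t^2+1}. Write $m=q^{r-2}+\cdots+q+1=(q^{r-1}-1)/(q-1)$, so that the subgroup $H\subset\F^\ast$ in question has order $tm$; since $q^{r-1}-1=(q-1)m$ and $t\mid q-1$, we have $tm\mid q^{r-1}-1$, so the cyclic group $\F^\ast$ really contains such an $H$. Let $H^\pr$ be the unique subgroup of order $m$. It coincides with $\ker N$, because the norm $N\colon\F^\ast\to\GF(q)^\ast$ is onto and $|\ker N|=(q^{r-1}-1)/(q-1)=m$; moreover $H^\pr\subset H$ since $m\mid tm$ and $\F^\ast$ is cyclic. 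Consequently $N(H)$ is a subgroup of $\GF(q)^\ast$ of order $|H|/|H^\pr|=t$.

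First I would run the opening of the proof of Theorem~\ref{Kr(r-1)!+1} unchanged: taking $r$ distinct vertices $\la a_1,b_1\ra,\ldots,\la a_r,b_r\ra$ with a common neighbour $\la x,y\ra$, I obtain the linear system with right-hand sides $h_1,\ldots,h_r\in H$ and, for each $j=3,\ldots,r$, the uniquely determined nonzero $\alpha_j,\beta_j$ with $\alpha_jh_1+\beta_jh_2=h_j$, the shifts $\alpha_j/\beta_j$ being nonzero and pairwise distinct. Setting $u=h_2/h_1$ and taking norms of $u=h_2/h_1$ and of $\frac{\alpha_j}{\beta_j}+u=\frac{h_j}{\beta_jh_1}$ yields
\begin{align*}
N(u) &= N(h_2)/N(h_1),\\
N\!\left(u+\tfrac{\alpha_j}{\beta_j}\right) &= \frac{N(h_j)}{N(\beta_j)\,N(h_1)},\qquad j=3,\ldots,r.
\end{align*}
These $r-1$ equations have the $r-1$ distinct shifts $0,\alpha_3/\beta_3,\ldots,\alpha_r/\beta_r$, so by Lemma~\ref{AG} there are at most $(r-1)!$ solutions $u$; and exactly as before each value of $u$ pins down $\la x,y\ra$, since the solution of the first two equations is only rescaled by $h_1\in H$.

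The decisive new point is that the right-hand sides above are no longer forced to be $1$: they depend on the $h_i$. Here I would invoke the coset structure. Writing $h_i=h_i^\pr g$ with $h_i^\pr\in H^\pr=\ker N$ gives $N(h_i)=N(g)$, so $N(h_i)$ depends only on the coset of $h_i$ in $H/H^\pr$ and lies in the order-$t$ group $N(H)$. Hence every right-hand side is a fixed function (the factors $N(\beta_j)$ being constant) of the $r-1$ ratios $N(h_2)/N(h_1),\ldots,N(h_r)/N(h_1)$, each of which lies in $N(H)$ and so takes at most $t$ values. This caps the number of admissible right-hand-side tuples at $t^{r-1}$, and for each tuple Lemma~\ref{AG} contributes at most $(r-1)!$ values of $u$; since every common neighbour yields one such $u$ and $u$ determines the neighbour, there are at most $t^{r-1}(r-1)!$ common neighbours, proving $K_{r,t^{r-1}(r-1)!+1}$-freeness.

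I expect the only real obstacle to be the bookkeeping of this last count: one must see that the norm system depends on the $h_i$ through the $r-1$ scaling-invariant ratios $N(h_j)/N(h_1)$ rather than through all $r$ cosets individually --- this is precisely what yields $t^{r-1}$ and not $t^{r}$ --- and one must confirm $|N(H)|=t$. Both specializations are reassuring: $t=1$ recovers the $(r-1)!$ bound of Theorem~\ref{Kr(r-1)!+1}, and $r=3$ recovers the $2t^2$ bound of Theorem~\ref{K3,2t^2+1}. The rest of the argument is a verbatim transcription of the two earlier proofs.
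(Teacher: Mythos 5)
Your proposal is correct and takes essentially the same route as the paper's own proof: the paper likewise reruns the argument of Theorem~\ref{Kr(r-1)!+1}, notes that each ratio $N(h_j/h_1)$, $j=2,\ldots,r$, is confined to the $t$ norm values of the coset representatives of $H/H^\pr$, and applies Lemma~\ref{AG} per right-hand-side tuple to get at most $t^{r-1}(r-1)!$ values of $h_2/h_1$, each determining one common neighbour. Your extra verifications (that $H^\pr=\ker N$ is contained in $H$ with index $t$, and that $u=h_2/h_1$ pins down the vertex $\la x,y\ra$) merely make explicit what the paper leaves implicit.
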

\begin{proof}
Let $H$ and $H^\prime$ be subgroups of $\F^\ast$ orders $t(q^{r-1}+ \cdots+q+1)$ and $q^{r-1}+ \cdots+q+1$ respectively.
Choose $t$ coset representatives $g_1,\ldots, g_t$ of $H/H^\prime$.

 We follow the same steps as in Theorem~\ref{Kr(r-1)!+1}. The only difference is that in the final system we obtain equations of the form
$$
N\left( \frac{\alpha_j}{\beta_j}+\frac{h_2}{h_1} \right) =
 N\left(\frac{h_j}{h_1}\right)N(\beta_j)^{-1},
$$ 
but $N\left(\frac{h_j}{h_1}\right)$ for $j=2, \ldots,r$ can be any of the $t$ elements $N(g_1), \ldots, N(g_t)$ only. Therefore, in this case there are at most $t^{r-1}(r-1)!$ solutions for $h_2/h_1$. Again, each of these solutions uniquely defines a vertex $\la x,y\ra$ attached to all $\la a_i,b_i\ra$.
\end{proof}
We obtain an asymptotic bound
$$
\ex(n, K_{r,t^{r-1}(r-1)!+1})\geq \frac{1}{2}t^{\frac{r-1}{r}} n^{2-\frac{1}{r}}(1+o(1)).
$$

Therefore, our construction achieves the same asymptotic lower bounds for bipartite Tur\'an numbers as the Norm-graphs do.


\begin{thebibliography}{}

\bibitem{F1} Z. F\"uredi, \emph{New asymptotics for bipartite Tur\'an numbers}, J. Combin. Theory Ser. A 75 (1996), no. 1, 141--144.

\bibitem{F2} Z. F\"uredi, \emph{An upper bound on Zarankiewicz problem}, Combin. Probab. Comput. 5 (1996), no. 1, 29--33.

\bibitem{N1} J. Koll\'ar, L. R\'onyai, T. Szab\'o, \emph{Norm-graphs and bipartite Tur\'an numbers}. Combinatorica 16, 399--406, 1996.

\bibitem{N2} N. Alon, L. R\'onyai, and T. Szab\'o, \emph{Norm-graphs: variations
and applications} J. Combin. Theory Ser. B, 76(2), 280--290, 1999.

\bibitem{M} B. Mont\'agh, \emph{Unavoidable substructures}, PhD Thesis, University of Memphis, May 2005.

\bibitem{B} W. G. Brown, \emph{On graphs that do not contain a Thomsen graph}, Canad.
Math. Bull. 9 (1966), 281--285.

\end{thebibliography}
\end{document}